\DeclareMathOperator{\e}{e}
\newcommand{\gl}{\lambda}
\newcommand{\mbf}{\mathbb{F}}
\newcommand{\mbg}{\mathbb{G}}
\newcommand{\comment}[1]{}
\newcommand{\mbr}{\mathbb{R}}
\newcommand{\mcf}{\mathcal{F}}
\newcommand{\mcg}{\mathcal{G}}
\newcommand{\ga}{\alpha}
\newcommand{\gs}{\sigma}
\newtheorem{theorem}{Theorem}
\newtheorem{assumption}[theorem]{Assumption}
\newtheorem{lemma}{Lemma}
\newtheorem{proposition}{Proposition}
\newtheorem{remark}{Remark}
\newcommand{\bee}{\begin{equation}}
\newcommand{\eee}{\end{equation}}
\newcommand{\bea}{\begin{eqnarray}}
\newcommand{\eea}{\end{eqnarray}}
\newcommand{\bean}{\begin{eqnarray}}
\newcommand{\eean}{\end{eqnarray}}
\newcommand{\raw}{\rightarrow}
\newcommand{\bgt}{\bar{\tau}}
\newcommand{\tY}{\tilde{Y}}
\newcommand{\bga}{\bar{\ga}}
\newcommand{\sign}{\text{\emph{sign}}}
\newcommand{\oneut}{\frac{1}{U_t}}
\begin{document}

\title{Credit Valuation Adjustment in Credit Risk with Simultaneous Defaults Possibility}
\author{Aditi Dandapani
\thanks{Institut f\"ur Mathematik, Universit\"at Z\"urich, Winterthurerstrasse 190, CH-8057 Z\"urich; email: ad2259@caa.columbia.edu
}
 ~and Philip Protter
\thanks{Statistics Department, Columbia University, New York, NY 10027; email: pep2117@columbia.edu.
}
\thanks{Supported in part by NSF grant DMS-1612758}
}
\date{\today }
\maketitle

\begin{abstract}

In a series of recent papers, Damiano Brigo, Andrea Pallavicini, and co-authors have shown that the value of a contract in a Credit Valuation Adjustment (CVA) setting, being the sum of the cash flows, can be represented as a solution of a decoupled forward-backward stochastic differential equation (FBSDE). CVA is the difference between the risk-free portfolio value and the true portfolio value that takes into account the possibility of a counter party's default. In other words, CVA is the market value of counter party credit risk. This has achieved noteworthy importance after the 2008 financial debacle, where counter party risk played an under-modeled but huge risk. In their analysis, Brigo et al make the classical assumption of conditional independence of the default times, given the risk free market filtration. This does not allow for the increasingly likely case of simultaneous defaults. We weaken their assumption, replacing it with a martingale orthogonality condition. This in turn changes the form of the BSDE that arises from the model. 
 
\end{abstract}

\section{Introduction}\label{intro}
In the traditional theory of financial derivatives one does not take into account the default risk of the contracting parties. Such risks have not been ignored in the literature, to be sure, see for example Bielecki and Rutkowski (2002) or Brigo and Masetti (2005), or more recently Bichuch, Capponi, and Sturm (2016), but they had not been much emphasized in the literature. All that changed after the financial debacle and collapse of 2008, which is usually traced to the bankruptcy and collapse of Bear Stearns  and -- especially -- of Lehmann Brothers, both investment houses previously considered to be `too big to fail." In the ashes of the crisis rose the theory of Credit Valuation Adjustment, known by its acronym CVA. It incorporates the adjustment to the pricing of a financial product that is due to the default risk of the counterparty. 

An example is the purchase of a financial product such as a call option from a counterparty we will call ``CP." If CP defaults, then one might suffer losses due to the inability of the CP to deliver the option's payoff. These types of losses were typically not considered in the general theory before the 2008 debacle. Often CDO's and the like were protected via insurance from monoline insurance companies, but these insurance companies typically were undercapitalized and failed along with the counter parties to the CDO's. It became clear that these risks (of counter party failure) were serious risks, and needed to be taken into account.

These issues have been treated in a series of papers by D. Brigo and A. Pallavicini and co-authors, culminating in the paper Brigo, Francischello, and Pallavicini (2019). They give a natural formulation of a contract's value, expressed as the (unique) solution of a forward-backward stochastic differential equation (FBSDE). An hypothesis common to virtually all of the previous work (with the notable exception of the work of Cr\'epey, Jeanblanc, and Zagari (2009) is to have default stopping times compatible with Cox constructions, with the exponentially distributed random variables in the Cox constructions  being independent of an underlying common filtration $\mbf=(\mcf_t)_{t\geq 0}$. This creates a relatively standard hypothesis of conditional independence given $\mbf$ of the two default times in question in the standard model. 

The goal of this paper is to weaken the hypothesis of conditional independence. We feel this is an advance, because it better allows for credit contagion. We will elaborate on this point in Section~\ref{xx}. 

\section{The Basic Framework}

We begin with a default-free filtration, $\mathbb{F}$, which is generated by the underlying asset, $S=(S_t)_{t\geq 0},$ which in turn satisfies, under the objective measure $P$:

\begin{equation}\label{e0} 
dS_t=r_tS_tdt+\sigma(S_t,t)dW_t. 
\end{equation} 
In the above, the $\mathbb{F}$ adapted process $r_t$ is the risk free interest rate.  We are also given a smooth function $\Phi$ and we will work with a claim of the form $\Phi(S_T).$

We will recall the setting in the work of Brigo, Francichello, and Pallavicini (2018) that we use as our starting point. We will refer to this paper as [BFP] hereafter. The goal is to price a generic derivative contract, with maturity $T$, between two financial entities, an investor $I$ and a counterparty $C$. It is often convenient to think of $I$ as representing the interests of a bank. The authors of [BFP] then use the standard reduced form framework, constructing two possible default times, $\tau_I$ and $\tau_C$, via Cox constructions. The arrival intensities of $\tau_I$ and $\tau_C$ are assumed to be adapted to an underlying filtration $\mbf=(\mcf_t)_{t\geq 0}$. The larger filtration $\mbg$ includes the independent exponential random variables $Z$ that are used in the Cox construction. 

A basic assumption, and a classical one, in the article [BFP] is that \emph{the two stopping times $\tau_I$ and $\tau_C$ are conditionally independent given the filtration $\mbf$}. 

\bigskip

In this paper, we wish to weaken this assumption. 

\bigskip

Using the conditional independence assumption, the authors of [BFP] derive a Forward-Backwards Stochastic Differential Equation. To wit, they define the Value process $V$ by 
\bee\label{eq1}
V_t=F_t+C_t+H_t+C^H_t.
\eee
which is the sun of the funding, collateral, and hedging accounts. The [BFP] authors next assume that the hedge is perfectly collateralized, and hence the amount the bank needs to finance at a given time $t$ is given by:
$$
F_t=V_t-C_t.
$$
To find the value $V_t$, [BFP] compute the conditional expectation under a risk neutral measure $Q$ of the sum of all future cash flows, discounted at the risk-free rate. Since we work on a finite horizon set-up, we let $\bgt=\tau\wedge T$. In this setting, we can write the equation for the value process $V$ as follows:
\begin{eqnarray}\label{eq2}
V _t&=&E\{1_{\{\tau>T\}}D(t,T)\Phi(S_T)+\int_t^TD(t,u)\pi_udu\vert\mcg_t\}\bigskip\bigskip\bigskip\text{ (contractual cash flows)}\notag\\
&-&E\{\int_t^{\bgt} D(t,u)(c_u-r_u)C_udu\vert\mcg_t\}\bigskip\bigskip\text{ (carrying cost of collateral account)}\notag\\
&-&E\{\int_t^{\bgt} D(t,u)(f_u-r_u)(V_u-C_u)du\vert\mcg_t\}\bigskip \bigskip \text{ (cost of funding the account)}\notag\\
&-&E\{\int_t^{\bgt} D(t,u)(r_u-h_u)H_udu\vert\mcg_t\}\bigskip\bigskip\text{ (Hedging costs)}\notag\\
&+&E\{D(t,\tau)1)_{\{t<\tau<T\}}(\theta_\tau + 1_{\{b\}}1_{\{\tau=\tau_j\}}LGD_t(V_{\tau_j}-C_{\tau_j})^+)\}
\end{eqnarray}
where we need some more explanations of the notation used in equation~\eqref{eq2} above. We have the following:
\begin{enumerate}
\item $c$ is $\mbf$ predictable and represents the collateral remuneration rate
\item $r$ is the spot interest rate and is $\mbf$ predictable
\item $f$ is the funding rate, and it is $\mbg$ adapted
\item $h$ is the hedging rate, and it is $\mbg$ adapted
\item $D(s,t)=D(s,t,r)=e^{-\int_s^tr_udu}$, the discount factor associated to the rate $r$
\item $(LGD_t)_{t\geq 0}$ denotes the Loss Given Default process as it evolves in time
\item Capital letters represent the antiderivatives, or cumulative quantities. E.g., $H_t=\int_0^t h_udu$
\end{enumerate}

Recall the elementary but fundamental version of a Bayes' result:

\begin{lemma}\label{ell1}
For a Borel measurable random variable $X$ and for any $t\in\mbr_+$, we have
\bee\label{eq3}
E\{1_{\{t<\tau\leq u\}}X\vert\mcg_t\}=1_{\{\tau>t\}}\frac{E\{1_{\{t<\tau\leq u\}}X\vert\mcf_t\}}{E\{1_{\{\tau>t\}}\vert\mcf_t\}}
\eee
In particular, for any $\mbg$ random variable $Y$, there exists an $\mbf$ random variable $Z$ such that 
\bee\label{eq4}
1_{\{\tau>t\}}Y=1_{\{\tau>t\}}Z.
\eee
\end{lemma}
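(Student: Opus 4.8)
The plan is to derive \eqref{eq3} from the structural decomposition \eqref{eq4}, which I would establish first as the genuine workhorse, so that the \emph{``in particular''} of the statement is, logically, the hinge rather than an afterthought. Throughout I use the defining structure of the enlarged filtration, $\mcg_t=\mcf_t\vee\mch_t$, where $\mch_t=\sigma(1_{\{\tau\le s\}}:s\le t)$ is generated by the default indicator process. The single observation driving everything is that the trace of $\mch_t$ on the survival set $\{\tau>t\}$ is trivial: every $B\in\mch_t$ satisfies either $\{\tau>t\}\subseteq B$ or $\{\tau>t\}\cap B=\emptyset$, since on $\{\tau>t\}$ one has $1_{\{\tau\le s\}}=0$ for every $s\le t$, so the generating process carries no information there.

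To prove \eqref{eq4} I would run a (functional) monotone-class argument on the generating $\pi$-system of $\mcg_t$ consisting of sets $A\cap B$ with $A\in\mcf_t$ and $B\in\mch_t$. For such a set the observation above gives $1_{\{\tau>t\}}1_{A\cap B}=1_{\{\tau>t\}}1_A\,c_B$, where $c_B\in\{0,1\}$ records whether $\{\tau>t\}\subseteq B$; the right-hand factor $1_A c_B$ is $\mcf_t$-measurable. Linearity extends the representation $1_{\{\tau>t\}}Y=1_{\{\tau>t\}}Z$, with $Z$ $\mcf_t$-measurable, to simple $\mcg_t$-measurable $Y$, and bounded monotone convergence (taking $Z$ as the a.s.\ limit on $\{\tau>t\}$ of the approximating $\mcf_t$-measurable random variables) extends it to all bounded, then by truncation all integrable, $\mcg_t$-measurable $Y$. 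This is exactly \eqref{eq4}.

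For \eqref{eq3} I would first note $\{t<\tau\le u\}\subseteq\{\tau>t\}$ together with $1_{\{\tau>t\}}\in\mcg_t$, so factoring the indicator out of the conditional expectation gives $E\{1_{\{t<\tau\le u\}}X\vert\mcg_t\}=1_{\{\tau>t\}}E\{1_{\{t<\tau\le u\}}X\vert\mcg_t\}$. Applying \eqref{eq4} to the $\mcg_t$-measurable factor yields an $\mcf_t$-measurable $Z$ with $E\{1_{\{t<\tau\le u\}}X\vert\mcg_t\}=1_{\{\tau>t\}}Z$. To pin down $Z$ I would project onto $\mcf_t\subseteq\mcg_t$ via the tower property: $E\{1_{\{t<\tau\le u\}}X\vert\mcf_t\}=E\{1_{\{\tau>t\}}Z\vert\mcf_t\}=Z\,E\{1_{\{\tau>t\}}\vert\mcf_t\}$, where $Z$ leaves the inner expectation because it is $\mcf_t$-measurable. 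Dividing gives $Z=E\{1_{\{t<\tau\le u\}}X\vert\mcf_t\}/E\{1_{\{\tau>t\}}\vert\mcf_t\}$, hence \eqref{eq3}.

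The main obstacle is not any single computation but making the two extension steps airtight. In \eqref{eq4} one must check the functional monotone-class hypotheses and confirm that the limiting $Z$ can be chosen $\mcf_t$-measurable and independent of the approximating sequence on $\{\tau>t\}$; this is precisely where triviality of $\mch_t$ on $\{\tau>t\}$ does the real work. In \eqref{eq3} the division requires $E\{1_{\{\tau>t\}}\vert\mcf_t\}>0$ almost surely on $\{\tau>t\}$, which holds because $\{E\{1_{\{\tau>t\}}\vert\mcf_t\}=0\}\cap\{\tau>t\}$ is a null set; I would record this positivity explicitly, since the quotient need only be well defined where it is multiplied by $1_{\{\tau>t\}}$.
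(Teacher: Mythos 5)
The paper does not actually prove Lemma~\ref{ell1}: it is ``recalled'' as an elementary Bayes-type identity (it is the standard key lemma of reduced-form credit risk, cf.\ Bielecki--Jeanblanc--Rutkowski, and reappears later as Lemma~\ref{ph1}), so there is no in-paper argument to compare against line by line. Your proof is the canonical one and is correct: establishing \eqref{eq4} first, via the observation that $\{\tau>t\}$ is an atom of $\mch_t$ and hence that the trace of $\mcg_t=\mcf_t\vee\mch_t$ on $\{\tau>t\}$ coincides with the trace of $\mcf_t$, and then deducing \eqref{eq3} by factoring out $1_{\{\tau>t\}}$, writing the $\mcg_t$-conditional expectation as $1_{\{\tau>t\}}Z$ with $Z$ $\mcf_t$-measurable, and identifying $Z$ through the tower property, is exactly the standard route. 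Three small points would deserve explicit mention in a full write-up. First, the statement needs $X$ integrable (``Borel measurable'' alone does not guarantee the conditional expectations exist); granted that, $1_{\{\tau>t\}}Z$ is integrable and the pull-out step $E\{1_{\{\tau>t\}}Z\vert\mcf_t\}=Z\,E\{1_{\{\tau>t\}}\vert\mcf_t\}$ is legitimate. Second, your atom claim for $\mch_t$ is cleanest under the convention $\mch_t=\sigma(1_{\{\tau\le s\}}:s\le t)$ that you adopt; the paper's Section~3 instead uses $\sigma(\tau\wedge t)$, for which $\{\tau\ge t\}$ rather than $\{\tau>t\}$ is the natural atom --- a harmless convention discrepancy, but worth noting. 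Third, as you correctly record, the division in \eqref{eq3} is only performed where it is multiplied by $1_{\{\tau>t\}}$, and $\{E\{1_{\{\tau>t\}}\vert\mcf_t\}=0\}\cap\{\tau>t\}$ is a null set, so the quotient is well defined there. I see no gaps.
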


\section{The New Framework}\label{xx}

We discard the assumption that the two stopping times are conditionally independent given the filtration $\mathbb{F}.$ This can be interpreted as taking into account the dependency of  the default of one corporation on the other's default, and not just on economy-wide (or sector) market forces. 

We will use the following constructions of the two stopping times: Define the filtration $\mathbb{G}$ to be $\mathbb{G}_t=\mathbb{F}_t \vee \mathbb{H}^{1}_t \vee \mathbb{H}^{2}_t,$ where $\mathbb{H}^{1}_t=\sigma(\tau^{1} \wedge t)$ and $\mathbb{H}^{2}_t=\sigma(\tau^{2} \wedge t).$

 Let $\lambda^1$ and  $\lambda^2$ be $\mathbb{F}$ adapted processes.

Then, we define the following stopping times: \begin{equation*} \tau^{1}=\inf\{t: \int_{0}^{t}\lambda^{1}_sds \geq Z^{1}\} \end{equation*} 
 \begin{equation*} \tau^{2}=\inf\{t: \int_{0}^{t}\lambda^{2}_sds \geq Z^{2}\} \end{equation*}

Let $\tau$ be the minimum of the two stopping times. 

In the above, $(Z^{1},Z^{2})$ is a bivariate exponential variable, and is $\mathcal{G}$-measurable.  Instead of $Z^1$ and $Z^2$ being assumed to be independent both of $\mbf$ and of each other, we retain the independence from $\mbf$ but assume only  that $(Z^1,Z^2)$ is a bivariate exponential random vector. To wit we assume that it is has the following survival function:
\bee\label{e3}
P(Z^1>s,Z^2>t)=\exp\{-\ga^1s-\ga^2t-\ga^3\max(s,t)\}
\eee
and in keeping with the Cox construction paradigm we assume $\ga^1=\ga^2=1$, so that~\eqref{e3} becomes:
\bee\label{3bis}
P(Z^1>s,Z^2>t)=\exp\{-s-t-\bga\max(s,t)\}
\eee
where we have used $\bga$ instead of $\ga^3$ for notational clarity. 
We call such a bivariate exponential distribution a BVE.

\bigskip

\noindent There are some key properties of the BVE. First of all the marginals, ie the distributions of $Z^1$ and $Z^2$, are both exponentials of parameter 1. Second, and significant, is that the BVE does not have a density. Let $\mu=\ga^1+\ga^2+\ga^3$ which in our case becomes $\mu=2+\bga$. We then have
\bee
P(Z^1>x,Z^2 >y)=\frac{\ga^1+\ga^2}{\mu}F_a(x,y)+\frac{\ga^3}{\mu}F_s(x,y),
\eee
where $F_a$ denotes the absolutely continuous component of the tail cdf, and $F_s$ denotes the singular component. We have
\bee
F_s(x,y)=\exp\{-\mu\max(x,y)\}
\eee
which is singular with respect to Lebesgue measure on $\mbr^2$. We also have
\bee
F_a(x,y)=\frac{\mu}{\ga^1+\ga^2}\exp\{-\ga^1x-\ga^2y-\ga^3\max(x,y)\}-\frac{\ga^3}{\ga^1+\ga^2}\exp\{-\mu\max(x,y)\}
\eee

\noindent One key property of the BVE is that one can have $P(Z^1=Z^2)>0$. This models the situation where a major event could affect two separate companies simultaneously, as previously discussed. For more on BVE's one can consult, for example, the classic article of Marshall and Olkin (1967).

We now state and prove three lemmas that will prove integral to our ability to express the value of the contract in $V$ in equation~\eqref{eq2} in terms of the ``default-free" filtration $\mbf:$

\bigskip

\begin{lemma}\label{t4}
Let $\tau$ be a $\mbg$ stopping time, and let $\lambda_t$ be such that \begin{equation}\label{mart}M_t=1_{\{t\geq\tau\}}-\int_0^{t\wedge\tau}\gl_sds \end{equation} is a $\mbg$ martingale. Assume that $X$ is such that the martingale $Z_t=E(Xe^{-\int_ 0^T\gl_sds}\vert\mcg_t)$ is strongly orthogonal to $M$, in the $L^2$ sense (cf for example~\cite{PP} for the theory of strongly orthogonal martingales). Then
\bee\label{2e5}
E\{X1_{\{\tau>T\}}\vert\mcg_t\}=1_{\{\tau>t\}}E\{Xe^{-\int_t^T\gl_sds}\vert\mcg_t\}
\eee
\end{lemma}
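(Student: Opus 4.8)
The plan is to reduce everything to the martingale property of the process $L_t=1_{\{\tau>t\}}e^{\int_0^t\gl_s ds}$ and the stability of strong orthogonality under stochastic integration. Write $A_t=\int_0^t\gl_s ds$ and $U_t=1_{\{\tau>t\}}$. First I would show that $L$ is a $\mbg$ martingale. Rearranging the defining relation~\eqref{mart} gives $dU_t=-dM_t-\gl_t U_{t-}\,dt$, since $\int_0^{t\wedge\tau}\gl_s ds=\int_0^t\gl_s U_{s-}\,ds$. An integration by parts, in which the bracket term vanishes because $A$ is continuous and of finite variation, then yields $dL_t=-e^{A_t}dM_t$. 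Hence $L-L_0=-\int_0^\cdot e^{A_s}\,dM_s$ is the stochastic integral of the predictable, locally bounded integrand $-e^{A_\cdot}$ against $M$; on the finite horizon $[0,T]$ with $\gl$ integrable, $e^{A}$ is bounded, so $L$ is a genuine (square-integrable) martingale and $L_0$ is a constant. I would stress that this step uses no orthogonality at all, only that $\gl$ is the $\mbg$-intensity of $\tau$.

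The second, purely algebraic, step is the identity $X1_{\{\tau>T\}}=Z_T L_T$. Here $Z_t=E\{Xe^{-A_T}\vert\mcg_t\}$ is the martingale of the hypothesis, with terminal value $Z_T=Xe^{-A_T}$, so that $Z_T L_T=Xe^{-A_T}\cdot 1_{\{\tau>T\}}e^{A_T}=X1_{\{\tau>T\}}$. Thus the left-hand side of~\eqref{2e5} is exactly $E\{Z_T L_T\vert\mcg_t\}$, and the whole problem becomes: show $Z_t L_t$ is a $\mbg$ martingale.

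This is the crux and the step I expect to be the main obstacle. The idea is to transfer the orthogonality from $M$ to $L$: since $\brac{Z,M}=0$ by hypothesis and $L-L_0=-\int_0^\cdot e^{A_s}dM_s$, the integration rule for the predictable covariation gives $\brac{Z,L}=-\int_0^\cdot e^{A_s}\,d\brac{Z,M}=0$, so $Z$ is strongly orthogonal to $L$ as well, whence the product $Z_tL_t-\brac{Z,L}_t=Z_tL_t$ is a $\mbg$ martingale. The difficulty is not the algebra but the integrability bookkeeping needed to make this rigorous: one must ensure $Z$ and $L$ live in a space where the $L^2$ strong-orthogonality calculus applies and where "$\brac{Z,L}=0$" upgrades the local martingale $ZL$ to a \emph{true} martingale, so that the conditional-expectation identity is legitimate. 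I would handle this by a localization argument together with a uniform-integrability (dominated convergence) step, exploiting the boundedness of $L$ on $[0,T]$ and the assumed square integrability of $Z$.

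Finally I would read off the conclusion. From the martingale property, $E\{Z_TL_T\vert\mcg_t\}=Z_tL_t=1_{\{\tau>t\}}e^{A_t}E\{Xe^{-A_T}\vert\mcg_t\}$. Since $e^{A_t}$ is $\mcg_t$-measurable it can be absorbed into the conditional expectation, giving $1_{\{\tau>t\}}E\{Xe^{-(A_T-A_t)}\vert\mcg_t\}=1_{\{\tau>t\}}E\{Xe^{-\int_t^T\gl_s ds}\vert\mcg_t\}$, which is precisely~\eqref{2e5}.
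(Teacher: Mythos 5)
Your proof is correct and follows essentially the same route as the paper's: both show that $L_t=1_{\{\tau>t\}}e^{\int_0^t\gl_s ds}$ is a martingale via $dL_t=-e^{\int_0^t\gl_s ds}\,dM_t$, identify the product $L_tZ_t$ with $1_{\{\tau>t\}}E\{Xe^{-\int_t^T\gl_s ds}\vert\mcg_t\}$, and invoke the strong orthogonality of $Z$ and $M$ to conclude that $LZ$ is a martingale. If anything, your version is slightly more complete, since you make explicit (via $\brac{Z,L}=-\int_0^\cdot e^{\int_0^s\gl_u du}\,d\brac{Z,M}=0$ and the ensuing integrability check) the step that the paper only asserts, namely why orthogonality to $M$ transfers to the stochastic integral $L$ and why the local martingale $ZL$ is a true martingale.
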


\begin{proof}
Let $N_t=1_{\{t\geq\tau\}}$, and then 
$$
1-N_t=1_{\{t<\tau\}}\text{ and }1-N_{t-}=1_{\{t\leq\tau\}}.
$$
We define $L_t=(1-N_t)e^{\int_0^t\gl_sds}$, and we claim $L$ is a local martingale. Since $L$ is a semimartingale, we can use integration by parts to get:
\begin{eqnarray*}
dL_t&=&(1-N_{t-})d\left(e^{\int_0^t\gl_sds}\right)-e^{\int_0^t\gl_sds}dN_t\\
&=&(1-N_{t-})\gl_te^{\int_0^t\gl_sds}dt-e^{\int_0^t\gl_sds}dN_t\\
&=&e^{\int_0^t\gl_sds}\left((1-N_{t-})\gl_tdt-dN_t\right)\\
&=&e^{\int_0^t\gl_sds}(-dM_t)=-e^{\int_0^t\gl_sds}dM_t
\end{eqnarray*}
Since $M$ is a martingale (and even an $L^2$ martingale), we conclude first that $L$ is a local martingale; but since the integrand is bounded, we know that $L$ is an actual martingale, and indeed itself an $L^2$ martingale. We let
\bee\label{2e6}
Y_t=E(Xe^{-\int_t^T\gl_sds}\vert\mcg_t)
\eee
and we define $U$ by
$$
U_t=(1-N_t)Y_t
$$
By the assumption of the strong orthogonality of $M$ and $Z$, we have that $U$, where $U=LZ$, is a martingale. This gives:
\begin{eqnarray*}
E(U_T\vert\mcg_t)&=&U_t\text{ or equivalently}\\
E(X1_{\{\tau>T\}}\vert\mcg_t)&=&1_{\{\tau>t\}}E(Xe^{-\int_t^T\gl_sds}\vert\mcg_t)
\end{eqnarray*}
which was what to be shown.
\end{proof}

Within credit risk theory,  we assume a spot interest rate process $(r_s)_{0\leq s\leq T}$  and a cumulative interest rate is then $e^{-\int_t^Tr_udu}$, which is what a dollar at time 0 is worth at time $T$. \emph{Under the Cox process hypothesis} David Lando showed in 1998 that Lemma~\ref{t4} extends to the following:
\begin{equation}
E\left(e^{-\int_t^Tr_udu}X1_{\{\tau>T\}}\vert\mcg_t\right)=1_{\{\tau>t\}}E(Xe^{-\int_t^T(r_u+\gl_u)du}\vert\mcg_t) \end{equation} 
Lando's theorem extends to the more general case when we assume a Cox process construction, as the next result shows. In this article we prefer to assume an orthogonality condition, which is similar to independence, but  weaker as an assumption. This allows us to avoid the  complete markets trap deriving from results of El Karoui (1999)\footnote{ See Blanchet-Scalliet  and Jeanblanc (2004) for a detailed proof of El Karoui's result} and Kusuoka (1999): To wit, El Karoui has shown that Cox constructions arise naturally within a complete markets framework where Hypothesis H applies. Kusuoka, however, has shown that this framework can fall apart with a change to an equivalent risk neutral measure, thus \emph{inter alia} showing that one needs a complete market framework and Hypothesis H simultaneously. This makes the more general approach of orthogonal martingales more appealing. 

\bigskip

 \begin{lemma}\label{t5}
 Assume that $M$ given in~\eqref{mart} is strongly orthogonal to $Z$, where $Z_t=E(Xe^{-\int_ 0^T(r_s+\gl_s)ds}\vert\mcg_t)$.
Then we have:
 \bee\label{2e8}
E\{e^{-\int_t^Tr_udu}X1_{\{\tau>T\}}\vert\mcg_t\}=1_{\{\tau>t\}}E\{Xe^{-\int_t^T(r_u+\gl_u)du}\vert\mcg_t\}
\eee
\end{lemma}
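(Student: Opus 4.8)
The plan is to mirror the proof of Lemma~\ref{t4} line by line, the only change being that $r+\gl$ now plays, in the discounting, the role that $\gl$ alone played there; the extra factor $e^{-\int r}$ is then handled purely by splitting integrals at $t$ and extracting $\mcg_t$-measurable factors. I would keep $N_t=1_{\{t\geq\tau\}}$ and the process $L_t=(1-N_t)e^{\int_0^t\gl_s ds}$ already studied in Lemma~\ref{t4}, for which we computed $dL_t=-e^{\int_0^t\gl_s ds}\,dM_t$. Thus $L=1-\int_0^{\cdot}e^{\int_0^s\gl_u du}\,dM_s$ is a stochastic integral against $M$ with an integrand bounded on $[0,T]$, hence an $L^2$ martingale, and its predictable covariation with the present $Z_t=E(Xe^{-\int_0^T(r_s+\gl_s)ds}\vert\mcg_t)$ is $\langle L,Z\rangle=-\int_0^{\cdot}e^{\int_0^s\gl_u du}\,d\langle M,Z\rangle_s=0$ by the assumed strong orthogonality of $M$ and $Z$. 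Consequently $L$ and $Z$ are strongly orthogonal and $U:=LZ$ is a (uniformly integrable) martingale.

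It then remains to read off the two endpoints. Splitting $\int_0^T=\int_0^t+\int_t^T$ and pulling the $\mcg_t$-measurable factor $e^{-\int_0^t(r_s+\gl_s)ds}$ out of $Z_t$, the factor $e^{\int_0^t\gl_s ds}$ from $L_t$ cancels the $\gl$-part and leaves
\bee
U_t=L_tZ_t=1_{\{\tau>t\}}\,e^{-\int_0^t r_s ds}\,E\big(Xe^{-\int_t^T(r_u+\gl_u)du}\vert\mcg_t\big),
\eee
while at $T$ we have $L_T=1_{\{\tau>T\}}e^{\int_0^T\gl_s ds}$ and $Z_T=Xe^{-\int_0^T(r_s+\gl_s)ds}$, so that $U_T=1_{\{\tau>T\}}Xe^{-\int_0^T r_s ds}$. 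Writing out $E(U_T\vert\mcg_t)=U_t$ and multiplying both sides by the $\mcg_t$-measurable quantity $e^{\int_0^t r_s ds}$ turns the left-hand exponent into $-\int_t^T r_s ds$ and cancels the discount factor on the right, yielding exactly~\eqref{2e8}.

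The only step needing genuine care, and hence the main obstacle, is the transfer of strong orthogonality from the hypothesised pair $(M,Z)$ to the pair $(L,Z)$ and the attendant upgrade of $U=LZ$ from a local to a true martingale. Both rest on $L$ being a stochastic integral against $M$ with integrand bounded on $[0,T]$, so that $\langle L,Z\rangle=\int(\cdots)\,d\langle M,Z\rangle$ vanishes and $U$ is uniformly integrable; once this is in place the remainder is the routine splitting of exponentials and extraction of $\mcg_t$-measurable factors illustrated above.
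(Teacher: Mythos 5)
Your proposal is correct and follows essentially the same route as the paper: the paper also reduces to the martingale $U=LZ$ with $L_t=1_{\{\tau>t\}}e^{\int_0^t\gl_s ds}$ (its auxiliary $\tY_t$ is exactly your $e^{\int_0^t\gl_s ds}Z_t$) and reads off the endpoints after multiplying by $e^{\int_0^t r_s ds}$. Your explicit justification that strong orthogonality passes from $(M,Z)$ to $(L,Z)$ via $L=1-\int_0^{\cdot}e^{\int_0^s\gl_u du}\,dM_s$ is a welcome elaboration of a step the paper only asserts.
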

 
 \begin{proof}
 The proof is identical to the proof of Theorem~\ref{t4} until we get to equation~\eqref{2e6}. There we change the definition of $Y$ (we call it now $\tY$ to distinguish it from the $Y$ in the proof of Lemma~\ref{t4}) to:
\bee\label{2e8}
\tY=E(Xe^{-\int_0^tr_sds}e^{-\int_t^T(r_s+\gl_s)ds}\vert\mcg_t)
\eee
Let $U_t=1_{\{\tau>t\}}\tY_t$. From the strong orthogonality of $M$ and $Z$, we have that $U=LZ$ is a martingale, where here $L_t=1_{\{\tau>t\}}e^{\int_0^t\gl_sds}$.
By the assumption of the strong orthogonality of both $M$ and $V$ with $Z$, we have that $L$ and $\tY$ are also strongly orthogonal. The result now follows, as in the previous proof.
 \end{proof}
 
 Lemma~\ref{t5} is useful for applications to credit risk. As an example, we consider a bond with continuous payout until default, or maturity of the bond, whichever comes first. Mathematically we can write the continuous payout in the form
$$
 g(X_s)1_{\{\tau>s\}}\text{ with cumulative payout from }t\text{ to }T\text{ as }\int_t^Tg(X_s)1_{\{\tau>s\}}ds.
$$
 With discounting, the cumulative payout is
 \bee\label{2e9}
 \int_t^Tg(X_s)1_{\{\tau>s\}}e^{-\int_t^sr_udu}ds
 \eee
 
 \begin{proposition}\label{t6}
 Under the continuous payout framework, and with the hypotheses of Lemma~\ref{t5} in force, we have
 \bee\label{2e10}
 E(\int_t^Tg(X_s)1_{\{\tau>s\}}e^{-\int_t^sr_udu}ds\vert\mcg_t)=1_{\{\tau>t\}}E(\int_t^Tg(X_s)e^{-\int_t^s(r_u+\gl_u)du}ds\vert\mcg_t)
 \eee
 \end{proposition}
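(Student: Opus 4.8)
The plan is to reduce the continuous-payout statement~\eqref{2e10} to the terminal-payoff identity of Lemma~\ref{t5} by integrating the latter against $ds$; the only genuinely new ingredient is a Fubini interchange between the conditional expectation and the Lebesgue integral in $s$.

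First I would pull the conditional expectation on the left-hand side of~\eqref{2e10} inside the $ds$-integral. Under the standing integrability assumptions on $g$ and $r$, the integrand $g(X_s)1_{\{\tau>s\}}e^{-\int_t^s r_u du}$ is jointly measurable and integrable on $[t,T]\times\Om$, so the conditional version of Fubini's theorem gives
\[
E\Big(\int_t^T g(X_s)1_{\{\tau>s\}}e^{-\int_t^s r_u du}\,ds\,\Big|\,\mcg_t\Big)=\int_t^T E\big(g(X_s)1_{\{\tau>s\}}e^{-\int_t^s r_u du}\,\big|\,\mcg_t\big)\,ds.
\]
Next, for each fixed $s\in[t,T]$ I would invoke Lemma~\ref{t5} with the terminal time $T$ replaced by the running time $s$ and the terminal variable $X$ replaced by $g(X_s)$. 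This is precisely the conclusion of Lemma~\ref{t5} read at horizon $s$, yielding
\[
E\big(g(X_s)1_{\{\tau>s\}}e^{-\int_t^s r_u du}\,\big|\,\mcg_t\big)=1_{\{\tau>t\}}\,E\big(g(X_s)e^{-\int_t^s(r_u+\gl_u)du}\,\big|\,\mcg_t\big).
\]
Since the factor $1_{\{\tau>t\}}$ is $\mcg_t$-measurable and independent of $s$, I can carry it outside the $ds$-integral after substituting this identity, and then apply conditional Fubini once more in the reverse direction to recover exactly the right-hand side of~\eqref{2e10}.

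The step I expect to be the real obstacle is the second one. Lemma~\ref{t5} is stated for a single horizon $T$, but here I need its conclusion simultaneously for every intermediate horizon $s\in[t,T]$. Making this rigorous requires that the strong-orthogonality hypothesis hold not merely for the martingale built from the terminal payoff, but for each of the martingales $Z^{(s)}_r=E\big(g(X_s)e^{-\int_0^s(r_u+\gl_u)du}\,\big|\,\mcg_r\big)$, $s\in[t,T]$, against the same $M$ from~\eqref{mart}. I would therefore read ``with the hypotheses of Lemma~\ref{t5} in force'' as including this uniform orthogonality across horizons for the running payoff. Once that is granted, the two Fubini applications are routine, needing only enough integrability on $g$ and $\gl$ to be legitimized, and the proposition follows.
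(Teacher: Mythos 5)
Your proposal is correct and follows essentially the same route as the paper's own proof: conditional Fubini to move the expectation inside the $ds$-integral, then Lemma~\ref{t5} applied at each fixed horizon $s$ with $g(X_s)$ in the role of $X$, then Fubini back. Your explicit observation that the strong-orthogonality hypothesis must be read as holding for each of the martingales $Z^{(s)}$, $s\in[t,T]$, is a point the paper leaves implicit, and it is a worthwhile clarification rather than a deviation.
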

 
 \begin{proof}
 In Lemma~\ref{t5} for fixed $s$ we let $g(X_s)$ play the role of the random variable $X$. Using Fubini's theorem for conditional expectations (see for example~\cite{PP}) and Lemma~\ref{t5} we have
 \begin{eqnarray*}
E(\int_t^Tg(X_s)1_{\{\tau>s\}}e^{-\int_t^sr_udu}ds\vert\mcg_t)&=&\int_t^TE(g(X_s)1_{\{\tau>s\}}e^{-\int_t^sr_udu}\vert\mcg_t)ds\\
&=&\int_t^T1_{\{\tau>t\}}E(g(X_s)e^{-\int_t^s(r_u+\gl_u)du}\vert\mcg_t)ds\\
&=&1_{\{\tau>t\}}E(\int_t^Tg(X_s)e^{-\int_t^s(r_u+\gl_u)du}\vert\mcg_t)ds
\end{eqnarray*} 
 \end{proof}

\section{Valuation of the contract $V$}\label{s4}

Let $V_t$ be the value of the contract (this is the sum of all discounted cash flows, which are defined in equation $1$ of Brigo at al (2019)). Then, we have:

 \begin{align}\label{contract} V_t&=\mathbb{E}[1_{\{\tau \textgreater T\}}\e^{-\int_{t}^{T}r_udu}\Phi(S_T)|\mathcal{G}_t]+ \mathbb{E}[\int_{t}^{T \wedge \tau}\e^{-\int_{t}^{u}r_sds}A_udu|\mathcal{G}_t]+ \\& \nonumber \mathbb{E}[\e^{-\int_{t}^{\tau}r_sds}1_{\{t\textless \tau \textless T\}}(\theta_{\tau}+1_{\{b\}}1_{\{\tau=\tau^{2}\}}LGD_{I}(V_{\tau_2}-C_{\tau_2})^{+})|\mathcal{G}_t]  	   \end{align}

 In the above, \begin{equation*} \theta=\varepsilon_t-1_{\{\tau=\tau^{1}\}}LGD_C(\varepsilon_t-C_t)^{+}+1_{\{b\}}1_{\{\tau=\tau^{2}\}}LGD_{I}(\varepsilon_t-C_t)^{-} \end{equation*} 
   
 and $A$ is an $\mathbb{F}$ adapted finite variation process equal to \begin{equation}\label{A} A_t=\pi_t + (f_t-c_t)C_u+(r_t-f_t)V_t+(r_t-h_t)H_t.\end{equation}

We begin with an assumption and then derive the new equation for the valuation process $V$ under our assumptions. 

\begin{assumption}\label{A1}
Assume that the martingale $Z_t=\mathbb{E}[\Phi(S_T)\e^{-\int_{0}^{T}(\lambda_s+r_s)ds}|\mathbb{G}_t]$ is strongly orthogonal in the $L^{2}$ sense to the $\mbg$ martingale $M_t=1_{\{\tau \leq t\}}-\int_{0}^{t \wedge \tau}\lambda_sds.$  
\end{assumption} 
\bigskip

From Lemma~\ref{t5}, we can write the first term in Equation~\eqref{contract} as 
\begin{equation}
1_{\{\tau \textgreater t\}}\mathbb{E}[\Phi(S_T)\e^{-\int_{t}^{T}(r_u+\lambda_u)}du|\mathbb{G}_t].
\end{equation} 

Using Proposition~\ref{t5}, we can write the second term in Equation~\eqref{contract} as 

\begin{equation} 
1_{\{\tau \textgreater t\}}\mathbb{E}[\int_{t}^{T}\e^{-\int_{t}^{s}(r_u+\lambda_u)du}A_sds|\mathbb{G}_t] 
\end{equation} 

Therefore, the value of the contract satisfies 
\begin{eqnarray}\label{contract2}
V_t&=& 1_{\{\tau \textgreater t\}}\mathbb{E}[\Phi(S_T)\e^{-\int_{t}^{T}(r_u+\lambda_u)}du|\mathbb{G}_t]+1_{\{\tau \textgreater t\}}\mathbb{E}[\int_{t}^{T}\e^{-\int_{t}^{s}(r_u+\lambda_u)du}A_sds|\mathbb{G}_t] \notag\\
&&\mathbb{E}[\e^{-\int_{t}^{\tau}r_sds}1_{\{t\textless \tau \textless T\}}(\theta_{\tau}+1_{\{b\}}1_{\{\tau=\tau^{2}\}}LGD_{I}(V_{\tau_2}-C_{\tau_2})^{+})|\mathcal{G}_t] . 
\end{eqnarray}

We denote by $G^{1}(t,t)$ the $\mathbb{F}$ submartingale $P(\tau \textgreater t |\mathbb{F}_t).$ Note that, from equation~\eqref{e3}, we have 
\begin{equation}\label{surv} 
G^{1}(t,t)=\e^{-(\int_{0}^{t}\lambda^{1}_s+\lambda^{2}_sds+\int_{0}^{t}\lambda^{1}_sds \vee \int_{0}^{t}\lambda^{2}_sds)}
.\end{equation}  

We remark that under the conditional independence assumption of [BFP], the process 
$$
G(t,t)=e^{-(\int_0^t\gl^1_s+\gl^2_sds)}. 
$$

We also have the following result for the $\mathbb{G}$ compensator of the intensity process $1_{\{\tau \leq t\}}:$ 

\bigskip

\begin{proposition} 

 \begin{equation*}1_{\{\tau \leq t\}}-\int_{0}^{t \wedge \tau}\lambda_sds \end{equation*} is a $\mathbb{G}$ martingale, where \begin{equation}\lambda_t=\frac{\lambda^{1}_t+\lambda^{2}_t}{G^{1}(t,t)}. \end{equation} 

\end{proposition}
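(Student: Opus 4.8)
The plan is to identify the $\mathbb{G}$-compensator of $N_t = 1_{\{\tau \leq t\}}$ from the Azéma supermartingale of $\tau$, exactly as in the classical hazard-process treatment of a single random time; the only genuinely new feature is that the survival process now carries the singular $\max$-term coming from the BVE. Write $G_t := G^1(t,t) = P(\tau > t \mid \mathcal{F}_t)$. The approach has three steps: (i) produce the Doob--Meyer decomposition $G = m - A$ of this supermartingale, with $m$ an $\mathbb{F}$-martingale and $A$ an $\mathbb{F}$-predictable increasing process with $A_0 = 0$; (ii) show that then $N_t - \int_0^{t\wedge\tau} \frac{dA_s}{G_{s-}}$ is a $\mathbb{G}$-martingale; and (iii) read off the intensity from $\frac{dA_s}{G_{s-}}$.

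For step (i): since $(Z^1,Z^2)$ is independent of $\mathbb{F}$ and each $\int_0^t \lambda^i_s\,ds$ is $\mathcal{F}_t$-measurable, conditioning on $\mathcal{F}_t$ and substituting into the BVE survival function \eqref{3bis} gives precisely \eqref{surv}, namely $G_t = \exp\{-(\int_0^t \lambda^1_s\,ds + \int_0^t \lambda^2_s\,ds + \bar\gamma(\int_0^t \lambda^1_s\,ds \vee \int_0^t \lambda^2_s\,ds))\}$. This $G$ is continuous (the BVE tail is continuous even though its density has a singular part) and nonincreasing, hence of finite variation, with $G_0 = 1$. A continuous finite-variation local martingale is constant, so the martingale part $m$ is trivial and $A_t = 1 - G_t$ is the predictable increasing component. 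Continuity of $G$ gives $G_{s-} = G_s$, whence $\frac{dA_s}{G_{s-}} = -\frac{dG_s}{G_s} = -d\log G_s$, exhibiting the compensator as $\int_0^{t\wedge\tau}\lambda_s\,ds$ with $\lambda_s = -\frac{d}{ds}\log G^1(s,s)$.

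For step (ii) I would, in the self-contained spirit of Lemma~\ref{ell1}, verify the martingale property by hand rather than quoting the enlargement-of-filtration literature. For $u > t$ it suffices to show $\mathbb{E}[N_u - N_t \mid \mathcal{G}_t] = \mathbb{E}[\int_{t\wedge\tau}^{u\wedge\tau}\lambda_s\,ds \mid \mathcal{G}_t]$, both sides vanishing on $\{\tau \leq t\}$. On $\{\tau > t\}$, Lemma~\ref{ell1} reduces each $\mathcal{G}_t$-conditioning to an $\mathcal{F}_t$-conditioning divided by $G_t$: the left-hand side becomes $\frac{G_t - \mathbb{E}[G_u\mid\mathcal{F}_t]}{G_t} = \frac{\mathbb{E}[A_u - A_t\mid\mathcal{F}_t]}{G_t}$, using $\mathbb{E}[1_{\{\tau>u\}}\mid\mathcal{F}_t] = \mathbb{E}[G_u\mid\mathcal{F}_t]$ by the tower property and $A = 1-G$. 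On the right-hand side, the $\mathbb{F}$-adaptedness of $\lambda$, the identity $\mathbb{E}[1_{\{\tau>s\}}\mid\mathcal{F}_s] = G_s$, and conditional Fubini turn the expression into $\frac{1}{G_t}\mathbb{E}[\int_t^u \lambda_s G_s\,ds \mid \mathcal{F}_t]$. Matching the two sides reduces the whole claim to the pointwise identity $\lambda_s G_s = -\frac{d}{ds}G^1(s,s)$, i.e. $\lambda_s = -\frac{d}{ds}\log G^1(s,s)$, which is exactly what step (i) furnishes.

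The step I expect to be the main obstacle is the differentiation of $G$ in step (i): the term $\int_0^t \lambda^1_s\,ds \vee \int_0^t \lambda^2_s\,ds$ is not smooth where the two cumulative intensities cross, and because the BVE places positive mass on $\{Z^1 = Z^2\}$ (the simultaneous-default feature that motivates the paper), the behaviour of $d(\int_0^{\cdot}\lambda^1 \vee \int_0^{\cdot}\lambda^2)$ on the coincidence set must be handled with care — this is precisely where the orthogonality/BVE setting departs from the conditionally independent [BFP] computation $G_t = e^{-\int_0^t(\lambda^1_s+\lambda^2_s)ds}$. The remaining technical points, namely that $A$ is genuinely predictable, that $G_{s-} > 0$ for $s \le \tau$ so the division is legitimate, and the justification of the conditional Fubini and optional-projection interchanges, are routine once $G$ and its decomposition are in hand.
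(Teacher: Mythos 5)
Your route is the same as the paper's: both identify the $\mathbb{G}$-compensator of $1_{\{\tau\le t\}}$ through the Doob--Meyer decomposition of the Az\'ema supermartingale $G^1(t,t)=P(\tau>t\mid\mathcal{F}_t)$ and the formula ``compensator $=\int_0^{\cdot\wedge\tau}dC_s/G^1(s-,s-)$''. The paper simply cites Proposition 3.14 of Bielecki--Jeanblanc--Rutkowski for that formula, whereas you re-derive it by hand from Lemma~\ref{ell1}; that verification is correct and is a genuine addition. Your worry about differentiating the $\max$ term is also not a real obstacle: the maximum of two absolutely continuous nondecreasing functions is absolutely continuous, so $C$ has a density $dt$-a.e.\ and the decomposition goes through.

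The gap is in the closing sentence of your step (ii), where you assert that the required identity $\lambda_sG^1(s,s)=-\frac{d}{ds}G^1(s,s)$ ``is exactly what step (i) furnishes'' and hence that the proposition follows. Step (i) furnishes the intensity $\lambda_s=-\frac{d}{ds}\log G^1(s,s)$, which by~\eqref{surv} equals $\lambda^1_s+\lambda^2_s+\frac{d}{ds}\bigl(\int_0^s\lambda^1_u\,du\vee\int_0^s\lambda^2_u\,du\bigr)$. The proposition instead asserts $\lambda_s=(\lambda^1_s+\lambda^2_s)/G^1(s,s)$, which via your own reduction would require $-\frac{d}{ds}G^1(s,s)=\lambda^1_s+\lambda^2_s$; that is incompatible with $C=1-G^1$ and the exponential form~\eqref{surv} except in degenerate cases. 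So your argument, carried to completion, proves the martingale property for a \emph{different} $\lambda$ than the one in the statement, and you never bridge the two. To be fair, the paper's own proof makes the same unsupported identification at exactly this point (it declares $c_t=\lambda^1_t+\lambda^2_t$ to be ``clear''), and the intensity your computation actually produces is the one consistent with the paper's later expression for $d(1/G^1(t,t))$; but as a proof of the proposition as written, the final matching step fails and must either be justified or the formula for $\lambda$ corrected.
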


\begin{proof} Let $G^{1}(t,t)=\mu_t-C_t$ be the Doob-Meyer decomposition of the $\mathbb{F}$ submartingale $G^{1}(t,t),$ where $C$ is an $\mathbb{F}$ adapted increasing process, and $\mu$ is an $\mathbb{F}$ martingale. From Proposition 3.14 in Bielecki et al (2009), we have that the $\mathbb{G}$ compensator of the process $H_t=1_{\{\tau \leq t\}}$ is given by $\frac{c_t}{G^{1}(t,t)},$ where $c$ is the process such that $dC_t=c_tdt.$ In this case, it is clear that $c_t=\lambda^{1}_t+\lambda^{2}_t.$ \end{proof}

We have the following lemma, whose proof can be found in  Bielicki, Jeanblanc, and Rutkowski (2009). 

\begin{lemma}\label{ph1} Let $Y$ be an integrable random variable. Then, the following holds: \begin{align} 1_{\{ \tau \textgreater t\}}E[Y | \mathbb{G}_t]=\frac{1_{\{\tau \textgreater t\}}   E[Y1_{\{\tau \textgreater t\}}|\mathbb{F}_t]    }{P[(\tau \textgreater t)|\mathbb{F}_t]}\end{align}   \end{lemma}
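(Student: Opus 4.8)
The plan is to follow the classical ``key lemma'' argument from the theory of progressive enlargement of filtrations, adapted to the fact that here $\mathbb{G}$ is the enlargement of $\mathbb{F}$ by the two random times $\tau^1$ and $\tau^2$, with $\tau=\tau^1\wedge\tau^2$. First I would establish a structural reduction: on the event $\{\tau>t\}=\{\tau^1>t\}\cap\{\tau^2>t\}$, every $\mathbb{G}_t$-measurable random variable agrees with an $\mathbb{F}_t$-measurable one. This is the two-time analogue of equation~\eqref{eq4} in Lemma~\ref{ell1}: on $\{\tau^i>t\}$ the atom generated by $\tau^i\wedge t$ is trivial, so the extra information carried by $\mathbb{H}^1_t\vee\mathbb{H}^2_t$ collapses on $\{\tau>t\}$. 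Applying this to the (trivially $\mathbb{G}_t$-measurable) random variable $E[Y|\mathbb{G}_t]$ produces an $\mathbb{F}_t$-measurable $\tilde{Y}$ with $1_{\{\tau>t\}}E[Y|\mathbb{G}_t]=1_{\{\tau>t\}}\tilde{Y}$.

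Next I would identify $\tilde{Y}$ by projecting onto $\mathbb{F}_t$. Since $\tau$ is a $\mathbb{G}$-stopping time we have $\{\tau>t\}\in\mathbb{G}_t$, hence $1_{\{\tau>t\}}E[Y|\mathbb{G}_t]=E[Y1_{\{\tau>t\}}|\mathbb{G}_t]$. Taking $E[\,\cdot\,|\mathbb{F}_t]$ of both sides of the identity above, the tower property (using $\mathbb{F}_t\subseteq\mathbb{G}_t$) yields on the left $E[Y1_{\{\tau>t\}}|\mathbb{F}_t]$, while on the right the $\mathbb{F}_t$-measurability of $\tilde{Y}$ lets me pull it out to obtain $\tilde{Y}\,P(\tau>t|\mathbb{F}_t)$. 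Equating and dividing by $P(\tau>t|\mathbb{F}_t)=G^{1}(t,t)$ gives $\tilde{Y}=E[Y1_{\{\tau>t\}}|\mathbb{F}_t]/P(\tau>t|\mathbb{F}_t)$, and substituting back into $1_{\{\tau>t\}}E[Y|\mathbb{G}_t]=1_{\{\tau>t\}}\tilde{Y}$ produces the claimed formula.

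One technical point to dispatch is the division: the denominator $P(\tau>t|\mathbb{F}_t)$ must be strictly positive wherever it is used. I would invoke the standard fact that $\{P(\tau>t|\mathbb{F}_t)=0\}\cap\{\tau>t\}$ is $P$-null, so that after multiplication by $1_{\{\tau>t\}}$ the quotient is well defined (the ratio may be set to zero on the exceptional null set without affecting the identity).

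The main obstacle is the structural reduction of the first step. The version recorded as equation~\eqref{eq4} is phrased for enlargement by a single random time, whereas here $\mathbb{G}$ carries both $\tau^1$ and $\tau^2$, and the BVE law of $(Z^1,Z^2)$ permits $P(\tau^1=\tau^2)>0$. I would therefore verify carefully that the reduction survives the joint enlargement, namely that the trace of $\mathbb{G}_t=\mathbb{F}_t\vee\mathbb{H}^1_t\vee\mathbb{H}^2_t$ on the event $\{\tau>t\}$ coincides with the trace of $\mathbb{F}_t$ there; once this is secured, the remaining steps are the routine tower-property and pull-out manipulations described above.
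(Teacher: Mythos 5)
Your argument is correct and is essentially the classical ``key lemma'' proof that the paper itself does not reproduce but instead cites from Bielecki, Jeanblanc, and Rutkowski (2009): restriction to $\{\tau>t\}$ to replace the $\mathbb{G}_t$-conditional expectation by an $\mathbb{F}_t$-measurable version, identification of that version by projecting onto $\mathbb{F}_t$, and the standard remark that $\{\tau>t\}\cap\{P(\tau>t\,|\,\mathbb{F}_t)=0\}$ is null. Your extra care with the two-time enlargement is well placed and correctly resolved: since $\tau^i\wedge t\equiv t$ on $\{\tau^i>t\}$, the trace of $\mathbb{H}^1_t\vee\mathbb{H}^2_t$ on $\{\tau>t\}=\{\tau^1>t\}\cap\{\tau^2>t\}$ is trivial regardless of the joint law of $(Z^1,Z^2)$, so the possibility $P(\tau^1=\tau^2)>0$ plays no role here.
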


We also have from Lemma 3.8.1 in Bielecki et al (2009), that the third term in equation~\eqref{contract2} can be represented as 

\begin{equation*}\frac{1}{G^{1}(t,t)}1_{\{\tau \textgreater t\}}\mathbb{E}[\int_{t}^{T}\e^{-\int_{t}^{u}r_sds}(\tilde{\theta}_u+1_{\{b\}}\lambda^{2}_uLGD_{I}(V_u-C_u)^{+})|\mathbb{F}_t].\end{equation*}

Here, \begin{equation}\label{tildet}\tilde{\theta}_t=G^{1}(t,t)(\lambda_t\varepsilon_t-\lambda^{1}_tLGD_C(\varepsilon_t-C_t)^{+}+1_{\{b\}}\lambda^{2}_tLGD_{I}(\varepsilon_t-C_t)^{-}).\end{equation} 

Using Lemma~\ref{ph1}, we can represent the value of the contract in terms of the default-free filtration $\mathbb{F}:$	
\begin{eqnarray}\label{df} 
V_t&=&\frac{1}{G^{1}(t,t)}E[ \Phi(S_T)\e^{-\int_{t}^{T}(r_s+\lambda_s)ds}1_{\{\tau \textgreater t\}}|\mathbb{F}_t] + \frac{1}{G^{1}(t,t)} E[\int_{t}^{T} \e^{-\int_{t}^{u}(r_s+\lambda_s)ds}A_udu|\mathbb{F}_t] \notag\\&& + \frac{1}{G^{1}(t,t)}1_{\{\tau \textgreater t\}}\mathbb{E}[\int_{t}^{T}\e^{-\int_{t}^{u}r_sds}(\tilde{\theta}_u+1_{\{b\}}LGD_{I}\lambda^{2}_u(V_u-C_u)^{+}|\mathbb{F}_t] 
\end{eqnarray}

We proceed to derive a BSDE satisfied by $V_t:$ 

From equation~\eqref{df}, we see that 
\begin{eqnarray*} 
e^{-\int_{0}^{t}(r_s+\lambda_s)ds}V_t&+&\frac{1}{G^{1}(t,t)}\int_{0}^{t}\e^{-\int_{0}^{u}(r_s+\lambda_s)}(A_u+\tilde{\theta}_u+1_{\{b\}}LGD_{I}\lambda^{2}_s(V_u-C_u)^{+})du\\
&=&\frac{1}{G^{1}(t,t)}\mathbb{E}[\int_{0}^{T}\e^{-\int_{0}^{u}(r_s+\lambda_s)ds}(A_u+\tilde{\theta}_u+1_{\{b\}}LGD_{I}\lambda^{2}_u(V_u-C_u)^{+})du|\mathbb{F}_t]\\ 
&&+\frac{1}{G^{1}(t,t)}\mathbb{E}[\e^{-\int_{0}^{T}(\lambda_s+r_s)ds}\Phi(S_T)|\mathbb{F}_t], 
\end{eqnarray*}

Let us denote by $M^{1}$ the $\mathbb{F}$ local martingale \begin{equation*} \mathbb{E}[\int_{0}^{T}\e^{-\int_{0}^{u}(r_s+\lambda_s)ds}(A_u+\tilde{\theta}_u+1_{\{b\}}LGD_{I}\lambda^{2}_u(V_u-C_u)^{+})du|\mathbb{F}_t]\end{equation*} and by $M^{2}$ the $\mathbb{F}$ local martingale \begin{equation*}\mathbb{E}[\e^{-\int_{0}^{T}(r_s+\lambda_s)ds}\Phi(S_T)|\mathbb{F}_t].\end{equation*}

Then, we have \begin{align}\label{BSDE1}e^{-\int_{0}^{t}(r_s+\lambda_s)ds}dV_t-(r_t+\lambda_t)\e^{-\int_{0}^{t}(r_s+\lambda_s)ds}V_tdt+\frac{1}{G^{1}(t,t)}\e^{-\int_{0}^{t}(r_s+\lambda_s)ds}(A_t+\tilde{\theta}_t)dt \\ \nonumber +\int_{0}^{t}\e^{-\int_{0}^{u}(r_s+\lambda_s)}(A_u+\tilde{\theta}_u)dud(\frac{1}{G^{1}(t,t)}) -(M^{1}_t+M^{2}_t)d(\frac{1}{G^{1}(t,t)})&=\frac{dM^{1}_t+dM^{2}_t}{G^{1}(t,t)}.\end{align} 

Now, $M^{1}$ and $M^{2}$ are closed $\mathbb{F}$ martingales and thus $\int_{0}^{t}\frac{1}{G^{1}(s,s)}dM^{1}_s$ and $\int_{0}^{t}\frac{1}{G^{1}(s,s)}dM^{2}_s$ are $\mathbb{F}$ local martingales.

Therefore, by the martingale representation theorem, we have that $$\int_{0}^{t}\frac{1}{G^{1}(s,s)}(dM^{1}_s+dM^{2}_s)=\int_{0}^{t}Z_sdW_s,$$ for an $\mathbb{F}$ predictable process $Z.$

Therefore, we have the following BSDE for $V:$ 
\begin{align}\label{bsde}
dV_t&=(r_t+\lambda_t)V_tdt-\frac{1}{G^{1}(t,t)}(A_t+1_{\{b\}}LGD_{I}\lambda^{2}_t(V_t-C_t)^{+}+\tilde{\theta}_t)dt \\& \nonumber-\e^{\int_{0}^{t}(r_s+\lambda_s)ds}\int_{0}^{t}\e^{-\int_{0}^{u}(r_s+\lambda_s)}(A_u+\tilde{\theta}_u+1_{\{b\}}LGD_{I}\lambda^{2}_u(V_u-C_u)^{+})dud(\frac{1}{G^{1}(t,t)})\\& \nonumber +\e^{\int_{0}^{t}(r_s+\lambda_s)ds}(M^{1}_t+M^{2}_t)d(\frac{1}{G^{1}(t,t)})+ \bar{Z}_tdW_t,
\end{align} 
where $\bar{Z}_t=\e^{\int_{0}^{t}(r_s+\lambda_s)ds}Z_t.$ 

Note that 
\begin{equation*}d(\frac{1}{G^{1}(t,t)})=\frac{1}{G^{1}(t,t)}(\frac{3}{2}(\lambda^{1}_t+\lambda^{2}_t)+\frac{1}{2}(\lambda^{1}_t-\lambda^{2}_t)\sign{(\lambda^{1}_t-\lambda^{2}_t)})dt,   
\end{equation*} 
which makes equation~\eqref{bsde}  rather

\begin{eqnarray}\label{bsde'} 
dV_t&=&[(r_t+\lambda_t)V_t-\frac{(A_t+\tilde{\theta}_t+1_{\{b\}}LGD_{I}\lambda^{2}_t(V_t-C_t)^{+})}{G^{1}(t,t)} \\&& \nonumber -\e^{\int_{0}^{t}(r_s+\lambda_s)ds}\int_{0}^{t}\e^{-\int_{0}^{u}(r_s+\lambda_s)}(A_u+\tilde{\theta}_u+1_{\{b\}}LGD_{I}\lambda^{2}_u(V_u-C_u)^{+})du\frac{1}{G^{1}(t,t)}(\frac{3}{2}(\lambda^{1}_t+\lambda^{2}_t)\\
&&+\frac{1}{2}(\lambda^{1}_t-\lambda^{2}_t)\sign(\lambda^{1}_t-\lambda^{2}_t)) -\e^{\int_{0}^{t}(r_s+\lambda_s)ds}(M^{1}_t+M^{2}_t)\notag\\
&&\times\frac{1}{G^{1}(t,t)}(\frac{3}{2}(\lambda^{1}_t+\lambda^{2}_t)+\frac{1}{2}(\lambda^{1}_t-\lambda^{2}_t)\sign(\lambda^{1}_t-\lambda^{2}_t))]dt+ \bar{Z}_tdW_t. 
\end{eqnarray}

\section{When do we have the orthogonality condition?}\label{s4bis}

We show the applicability of our results to the question at hand. To wit: Does Assumption~\ref{A1} hold for our Bivariate Exponential $(Z^1,Z^2)$ whose survival function is given by $P(Z^1>s,Z^2>t)=\exp\{-s-t-\bga\max(s,t)\}$ as in~\eqref{3bis}? We denote
\bee\label{ne}
Y=\Phi(S_T)\e^{-\int_{0}^{T}(\lambda_s+r_s)ds}
\eee
We make two quite mild assumptions:

\bigskip

\noindent {\bf {\large Assumptions:}}

\begin{itemize}

\item[a)] $Y$ in~\eqref{ne} is in $L^1$

\item[b)] The processes $\gl^1$ and $\gl^2$ are both bounded
\end{itemize}


\bigskip

\noindent
We let $(Y_t)_{t\geq 0}$ be the $\mbg$ uniformly integrable martingale $Y_t=E\{Y\vert\mcg_t\}$. Recall that
\bee\label{pep1}
M_t=1_{\{t\geq\tau\}}-\int_0^{t\wedge\tau}\gl_sds
\eee
and we want to show that the process $[Y,M]$ is a $\mbg$ martingale. Here $Y$ refers at times to the martingale $Y=(Y_t)_{t\geq 0}$ and at other times to the random variable  $Y=\Phi(S_T)\e^{-\int_{0}^{T}(\lambda_s+r_s)ds}$. It should be clear which is which from the context. Using the theory of the expansion of filtrations, and in particular by~\cite[Theorem 14 of Chapter VI]{PP} we have
\bee\label{pep2}
Y_t=E(Y\vert\mcg_t)=\frac{1}{U_t}(^o(Y1_{\{t<\tau\}}))+Y1_{\{t\geq\tau\}}
\eee
In~\eqref{pep2} by $^o(R)$ we mean the optional projection of the process $R$ onto the $\mbf$ filtration. Also recall that we defined  $U_t=^o1_{\{\tau>t\}}$ earlier. 

The process $[Y,M]=\Delta Y_\tau\Delta M_\tau$, since $M$ has only one jump, and it's at $\tau$. Since $\Delta M_\tau=1$, to show that $[Y,M]$ is a martingale, it suffices to show that $E([Y,M]_\nu)=0$ for every stopping time $\nu$. Due to the simple nature of $[Y,M]$, i.e., that it jumps only at the time $\tau$, it suffices to show that $E(\Delta Y_\tau)=0.$ We have trivially:
\bee\label{ne3}
E(\Delta {Y_\tau})=E(Y_\tau-Y_{\tau-})=E(Y_\tau)-E(Y_{\tau-})
\eee

Since $Y$ is a uniformly integrable martingale in $\mbg$ and $\tau$ is a $\mbg$ stopping time, we have 
$$
E(Y_\tau)=E(Y)
$$
Therefore we need to investigate $E(Y_{\tau-})$ and to show it is equal to $E(Y)$. {\color{blue} By uniform integrability in $t$ of $\frac{1}{U_t}(^o(Y1_{\{t<\tau\}})),$} we have 
$$ E(Y_{\tau-})=E(\frac{1}{U_t}(^o(Y1_{\{t<\tau\}}))\big |_{\{t=\tau-\}})=E(\lim_{t\nearrow\infty}\frac{1}{U_t}(^o(Y1_{\{t<\tau\}}))).$$

By~\eqref{pep2} we need to show that $E(\frac{1}{U_t}(^o(Y1_{\{t<\tau\}})))=E(Y)$. At a given time $t$, the optional projection is equal to the conditional expectation a.s., so we have
$$
E(\frac{1}{U_t}(^o(Y1_{\{t<\tau\}}))=E(\frac{1}{U_t}E((Y1_{\{t<\tau\}}))\vert\mcf_t))=E(\frac{1}{U_t}Y1_{\{t<\tau\}})
$$
Let $\mu$ be the distribution measure of $(Z_1,Z_2)$, our bivariate exponential. Recall that our stopping time in question is $\tau=\tau_1\wedge\tau_2$.

The event 
\bee\label{em1}
\{t<\tau\}=\{t<\tau_1\}\cap\{t<\tau_2\}=(\int_0^t\gl^1_sds\textless Z_1)\cap(\int_0^t\gl^2_sds\textless Z_2)
\eee

Using this, and taking expectations of the conditional expectations, we have
\begin{eqnarray}\label{e6ter}
E(Y1_{\{t<\tau\}})&=&E(E(Y1_{\{t<\tau\}}\vert\mcf_t))=E(E(Y1_{\{\int_0^t\gl^1_sds\textless Z_1\}}1_{\{\int_0^t\gl^2_sds\textless Z_2\}}\vert\mcf_t))\\
&=&E(\int_0^\infty E(Y1_{\{\int_0^t\gl^1_sds\textless Z_1\}}1_{\{\int_0^t\gl^2_sds\textless Z_2\}}\vert (Z_1=x,Z_2=u)\mu(dxdu)\vert\mcf_t))\notag\\
&=&E(\int_0^\infty\int_0^\infty E(Y1_{\{\int_0^t\gl^1_sds\textless x\}}1_{\{\int_0^t\gl^2_sds\textless u\}}\vert (Z_1=x,Z_2=u)\vert\mcf_t)\mu(dxdu)\notag\\
&=& \text{and using the independence of }(Z_1,Z_2)\text{ from }\mbf\notag\\
&&E(\int_0^\infty \Theta_t1_{\{\int_0^t\gl^1_sds\textless x\}}1_{\{\int_0^t\gl^2_sds\textless u\}}\mu(dxdu)),\text{ where }\Theta_t=E(Y\vert\mcf_t)
\end{eqnarray}

In the above we also used Fubini's Theorem for conditional expectations. The key term, using~\eqref{e6ter} , computes as follows:
\bee\label{ne11}
E(\frac{1}{U_t}{^o(Y1_{\{t<\tau\}})})=E(\oneut\Theta_t1_{\{t<\tau\}})\text{ with }\Theta_t\in\mcf_t
\eee
Therefore
\begin{eqnarray}\label{e14}
E(\frac{1}{U_{t}}(\Theta_tE(1_{\{t<\tau\}}\vert\mcf_t)))&=&E(\frac{1}{^o1_{\{\tau>t\}}}\Theta_t ^o1_{\{t<\tau\}})\notag\\
&=&E(\frac{ ^o1_{\{t<\tau\}}}{ ^o1_{\{t<\tau\}}}\Theta_t)=E(E(Y\vert\mcf_t)\frac{ ^o1_{\{t<\tau\}}}{ ^o1_{\{t<\tau\}}})\notag\\
&=&E(Y\frac{ ^o1_{\{t<\tau\}}}{ ^o1_{\{t<\tau\}}})
\end{eqnarray}

We conclude that 
\bee\label{ne17}
E(Y_{\tau-})=E(\lim_{t\nearrow\infty}\frac{1}{U_t}(^o(Y1_{\{t<\tau\}})))=\lim_{t\nearrow\infty}E(\frac{1}{U_t}(^o(Y1_{\{t<\tau\}}))=\lim_{t\nearrow\infty}E(Y)=E(Y)
\eee

As an addendum, we note that $0<\oneut<\infty$ as a consequence of our formula for $G(t,t)$, given in~\eqref{surv}. The assumption that $\gl^1$ and $\gl^2$ are bounded gives us that $\oneut$ is also bounded. 

\qed

\section{The FBSDE}\label{s5}

Here we are in an easy case for FBSDEs, since the forward equation and the backward equation are uncoupled. The fact that we are working on a specified compact time interval makes the analysis even simpler. Of course, it is a standard key assumption that $\gs$ of the forward equation never vanish. we can treat the two equations separately. For the forward equation, we only need the usual Lipschitz and linear growth conditions that are classical (see for example Protter (2005)). After we have the solution $S$ and hence have $S_T$ and this $\Phi(S_T)$, we can invoke any of several theorem about the existence of a solution of a Backwards SDE, and we need not get lost in the thicket of the theory of BSDEs. 

For example, Theorem 1 of Lepeltier and San Martin (1997) gives existence under minimal hypothese on the coefficient (just continuity) but if one also wants uniqueness of the solution (and we do), then we need more. This is provided by the now classical and seminal paper of Pardoux and Peng (1990). Thus we have:

\bigskip

\begin{theorem}\label{t2}
Let $(X,Y,Z)$ solve the following on the interval $[0,T]:$ 

 \begin{eqnarray}\label{gen}
dX_t&=&b(t,X_t)dt+ \sigma(t,X_t)dW_t;  \qquad X_0=x \\ \label{gen1} 
dY_t&=&-f(t,X_t,Y_t,Z_t)dt+Z_tdW_t; \quad Y_T=g(X_T).	 \end{eqnarray}

Assume that the functions $b,$ $\sigma$ and $f$ satisfy, for a constant $K$ and for all $t,$

\begin{itemize}

\item  $|b(t, x)-b(t, x^{'})| + |\sigma(t, x)-\sigma(t, x^{'})| \leq K|x-x^{'}|$ 

\item  $|b(t, x)| + |\sigma(t, x)|\leq K(1 + |x|)$

\item $ |f(t, x, y, z)-f(t, x, y^{'}, z^{'})| \leq K(|y-y^{'}| + |z-z^{'}|).$

\end{itemize}

Assume, in addition, that there exists a positive constant $p \geq \frac{1}{2}$ such that 
\begin{equation*} |g(x)|+| f(t,x,0,0)| \leq K(1+p|x|). \end{equation*}

Then the FBSDE~\eqref{gen}-\eqref{gen1} has a unique solution.
\end{theorem}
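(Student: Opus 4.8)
The plan is to exploit the decoupled structure of the system: equation~\eqref{gen} is an autonomous forward SDE that does not involve $Y$ or $Z$, so I would first solve it in isolation and only afterwards feed its solution into the backward equation~\eqref{gen1} as exogenous data. This reduces the theorem to two classical results applied in sequence, and no genuine coupled fixed-point argument (such as a four-step scheme) is required.

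First I would treat the forward equation. The Lipschitz bound on $b$ and $\sigma$ together with their linear-growth bound are exactly the hypotheses of the standard existence-and-uniqueness theorem for SDEs (e.g. Protter (2005)), which produces a unique strong solution $(X_t)_{0\le t\le T}$ adapted to $\mbf$ with $X_0=x$. Alongside existence I would record the accompanying moment estimate $\mbe[\sup_{0\le t\le T}|X_t|^q]<\infty$ for every $q\ge 1$, obtained in the usual way from the linear-growth bound via the Burkholder--Davis--Gundy inequality and Gronwall's lemma; since $X_0=x$ is deterministic these moments are all finite, and this estimate is precisely what will make the backward data integrable.

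Second, with the path of $X$ now fixed, I would view~\eqref{gen1} as a standard BSDE with terminal value $\xi=g(X_T)$ and driver $\widetilde{f}(t,y,z):=f(t,X_t,y,z)$. The hypotheses of Pardoux and Peng (1990) must then be checked: (i) $\widetilde{f}$ is uniformly Lipschitz in $(y,z)$, which is immediate from the Lipschitz assumption on $f$; and (ii) $\xi\in L^2$ together with $\mbe\int_0^T|\widetilde{f}(t,0,0)|^2\,dt<\infty$, both of which follow by combining the growth bound $|g(x)|+|f(t,x,0,0)|\le K(1+p|x|)$ with the moment estimate on $X$ from the previous step. Once these are verified, the theorem of Pardoux and Peng yields a unique pair $(Y,Z)$ of $\mbf$-adapted, square-integrable processes solving the backward equation, and together with $X$ this furnishes the unique solution $(X,Y,Z)$ of the full FBSDE~\eqref{gen}--\eqref{gen1}.

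The argument carries no serious obstacle precisely because the system decouples; the only point requiring genuine care is the integrability verification in step (ii), which is exactly where the growth condition on $g$ and $f(\cdot,0,0)$ and the forward moment estimate are used together. I would note in passing that continuity of $f$ alone would already give existence via Lepeltier and San Martin (1997), but it is the Lipschitz hypothesis invoked through Pardoux and Peng that delivers the \emph{uniqueness} asserted in the statement.
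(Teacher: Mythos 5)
Your proposal is correct and follows essentially the same route as the paper: the paper likewise observes that the system is decoupled, solves the forward equation by the classical Lipschitz/linear-growth existence theorem, and then invokes Pardoux--Peng (1990) for the backward equation to get uniqueness, noting Lepeltier--San Martin (1997) would give existence under mere continuity. Your added detail on the moment estimates for $X$ and the verification of square-integrability of $g(X_T)$ and $f(t,X_t,0,0)$ is a welcome elaboration of a step the paper leaves implicit.
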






In the more complicated case of uncoupled FBSDEs, J. Ma, P. Protter, and J. Yong (1994) developed the so-called four step scheme approach to solving SDEs. They connected the solution to the existence of classical solutions of certain hyperbolic PDEs, and under the right hypotheses were able to show that the backwards solution was a function of a partial derivative of the solution of said PDE. This is a nice feature, explicitly giving the backwards SDE in terms of the forward SDE. It has now been extended to cover different situations. We use the following version:

\bigskip

\begin{theorem}\label{t3}
Consider the following FBDSE:

\begin{eqnarray}\label{fb} dS_t&=&r_tS_tdt+\sigma(S_t,t)dW_t; \qquad S_0=s_0 \\ \label{fbv}dV_t&=&[(r_t+\lambda_t)V_t+\frac{(A_t+\tilde{\theta}_t+1_{\{b\}}LGD_{I}\lambda^{2}_t(V_t-C_t)^{+})}{G^{1}(t,t)} \\
&&\nonumber+\e^{\int_{0}^{t}(r_s+\lambda_s)ds}\int_{0}^{t}\e^{-\int_{0}^{u}(r_s+\lambda_s)}(A_u+\tilde{\theta}_u+1_{\{b\}}LGD_{I}\lambda^{2}_t(V_u-C_u)^{+})du(\frac{3}{2}(\lambda^{1}_t+\lambda^{2}_t)\notag\\
&&+\frac{1}{2}(\lambda^{1}_t-\lambda^{2}_t)\sign(\lambda^{1}_t-\lambda^{2}_t)) -\e^{\int_{0}^{t}(r_s+\lambda_s)ds}(M^{1}_t+M^{2}_t)\notag\\
&&\times(\frac{3}{2}(\lambda^{1}_t+\lambda^{2}_t)+\frac{1}{2}(\lambda^{1}_t-\lambda^{2}_t)\sign(\lambda^{1}_t-\lambda^{2}_t))]dt+ \bar{Z}_tdW_t; \quad V_T=\Phi(S_T)\notag.
 \end{eqnarray} 

Let us denote by $K=(K_t)_{t\geq 0}$ the bounded process 
$$
\frac{3}{2}(\lambda^{1}_t+\lambda^{2}_t)+\frac{1}{2}(\lambda^{1}_t-\lambda^{2}_t)\sign(\lambda^{1}_t-\lambda^{2}_t).
$$

Assume the following: 
\begin{enumerate}		
\item $\lambda^{1},$  $\lambda^{2}$ are bounded, Lipschitz continuous functions of $S.$  
\item $r,$ $h,$ and $f$ are all bounded, deterministic functions of $t.$
\item $\pi_t$ is a deterministic function of $t$ and $S_t,$ Lipschitz continuous in $S.$ 
\item $C_t=\alpha_t V_t,$ where $0 \leq \alpha \leq 1$ 
\item The closeout value $\varepsilon_t$ is equal to $V_t.$
\item $H_t=H(t, S_t,V_t, Z_t),$ $H(t,x,v,z)$ being a deterministic Lipschitz-continuous function in v, z, uniformly in t. Also, $H(t,x,0,0)$ is continuous in $x.$ 
\item There exists a constant $L$ and a constant $p \geq \frac{1}{2}$ such that $|g(x)|+ | B(t,x,0,0)| \leq L(1+p|x|).$
\end{enumerate} 
Then, the FBSDE~\eqref{fb}-\eqref{fbv} has a unique solution; that is, there exist deterministic functions $u(t,x)$ and $d(t,x)$ such that the solution $(S_t, V_t, Z_t)$ is given by $V_t=u(t,S_t),$ and $Z_t=d(t,S_t)\sigma(t,S_t).$  $u(t, x)=V^{x}_t$ is the unique viscosity solution to the following partial differential equation: 
 \begin{eqnarray}\frac{\partial u}{\partial t}+\frac{1}{2}\sigma^{2}(t,x) \frac{\partial^2 u}{\partial x^2}+r_tx\frac{\partial u}{\partial x}+B(t,x,u,\sigma\frac{\partial u}{\partial x})&=&0 \\ u(T,x)&=&\Phi(x). \end{eqnarray} 
 
\end{theorem}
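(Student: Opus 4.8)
The system \eqref{fb}--\eqref{fbv} is decoupled, since the forward equation \eqref{fb} for $S$ involves neither $V$ nor $Z$. My plan is therefore to solve the two equations in turn. For the forward equation, Assumptions (1)--(3) make $\sigma(t,\cdot)$ and the drift $r_tx$ Lipschitz with linear growth and $r$ bounded, so the classical strong existence-and-uniqueness theory for SDEs (Protter (2005)) produces a unique solution $S$, and in particular a well-defined terminal datum $\Phi(S_T)$ for the backward equation.

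The decisive step is to exhibit the driver of \eqref{fbv} as a bona fide function $B(t,S_t,V_t,Z_t)$, free of path-dependence, so that Theorem~\ref{t2} and the four-step scheme of Ma, Protter and Yong (1994) become applicable. The two suspicious terms are the one carrying $\int_0^t \e^{-\int_0^u(r_s+\lambda_s)ds}(\cdots)\,du$ and the one carrying $M^1_t+M^2_t$. These are not independent: the identity for $V$ displayed just above \eqref{bsde} reads, after multiplication by $\e^{\int_0^t(r_s+\lambda_s)ds}$, $\e^{\int_0^t(r_s+\lambda_s)ds}(M^1_t+M^2_t)=G^1(t,t)V_t+\e^{\int_0^t(r_s+\lambda_s)ds}\int_0^t \e^{-\int_0^u(r_s+\lambda_s)ds}(\cdots)\,du$. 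Substituting this into \eqref{fbv} makes the two running-integral contributions cancel and leaves a Markovian drift of the schematic form $(r_t+\lambda_t)V_t+K_tV_t-G^1(t,t)^{-1}(A_t+\tilde{\theta}_t+1_{\{b\}}LGD_I\lambda^2_t(V_t-C_t)^+)$, which I would take as the definition of $B(t,S_t,V_t,Z_t)$; here $A_t$ and $\tilde{\theta}_t$ become functions of $(t,S_t,V_t,Z_t)$ once Assumptions (3)--(6) are imposed, and the $G^1$ prefactor of $\tilde{\theta}_t$ in \eqref{tildet} cancels the division by $G^1(t,t)$ in that term.

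Next I would check the hypotheses of Theorem~\ref{t2} for this $B$. By Assumption (4) one has $C_t=\alpha_tV_t$ with $\alpha_t\in[0,1]$ and by Assumption (5) $\varepsilon_t=V_t$, so every term $(V_t-C_t)^{\pm}=\bigl((1-\alpha_t)V_t\bigr)^{\pm}$ is $1$-Lipschitz in $V_t$; the factor $G^1(t,t)^{-1}$ is bounded and bounded away from $0$ on $[0,T]$ because, by \eqref{surv} and the boundedness of $\lambda^1,\lambda^2$ in Assumption (1), its exponent stays finite (this is the $0<1/U_t<\infty$ observation of Section~\ref{s4bis}); $K_t$ and $\lambda_t=(\lambda^1_t+\lambda^2_t)/G^1(t,t)$ are bounded; and the $Z$-dependence enters only through $H_t=H(t,S_t,V_t,Z_t)$ inside $A_t$, which Assumption (6) makes Lipschitz in $(v,z)$ uniformly in $t$. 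Hence $B$ is uniformly Lipschitz in $(v,z)$, and the growth bound $|g(x)|+|B(t,x,0,0)|\le L(1+p|x|)$ is precisely Assumption (7). With these in place, Pardoux--Peng (1990) (i.e. Theorem~\ref{t2}) yields a unique square-integrable solution $(V,Z)$, and the four-step scheme identifies it as $V_t=u(t,S_t)$, $Z_t=\partial_x u(t,S_t)\,\sigma(t,S_t)$, with $u$ the unique viscosity solution of the stated semilinear PDE (nonlinear Feynman--Kac).

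I expect the main obstacle to be the reduction carried out in the second paragraph. One must verify that the cancellation holds pathwise and with the correct signs, and --- more delicately --- confirm that no residual path-dependence survives: the surviving coefficients $G^1(t,t)^{-1}$ and $\lambda_t$ depend on $\int_0^t\lambda^i(S_s)\,ds$, so if $\lambda^1,\lambda^2$ genuinely depend on the running $S$ rather than on $t$ alone, one must either take them deterministic or enlarge the forward state to $(S_t,\int_0^t\lambda^1,\int_0^t\lambda^2)$ for the representation $V_t=u(t,S_t)$ and the two-variable PDE to be literally correct. Establishing that $G^1(t,t)$ stays uniformly bounded away from zero, since the entire driver is divided by it, is the other point that should be argued explicitly rather than taken for granted.
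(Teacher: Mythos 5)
Your proposal is correct, and its skeleton---solve the decoupled forward SDE first, then apply Pardoux--Peng (Theorem~\ref{t2}) together with the four-step scheme / nonlinear Feynman--Kac representation to the backward equation---is exactly the paper's. Where you genuinely diverge is in the treatment of the driver, and your route is the more defensible one. The paper's proof keeps the two path-dependent terms, $K_t\e^{\int_0^t(r_s+\lambda_s)ds}\int_0^t\e^{-\int_0^u(r_s+\lambda_s)ds}(\cdots)\,du$ and $K_t\e^{\int_0^t(r_s+\lambda_s)ds}(M^1_t+M^2_t)$, inside $B(t,x,u,z)$ and simply declares them to be ``bounded functions of $t$''; this is not justified, since the first depends on the whole past of $(S,V)$ and the second on conditional expectations of the future, so as written $B$ is not a function of $(t,x,u,z)$ and the Lipschitz verification is carried out on an object that is not yet a Markovian driver. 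Your cancellation, read off from the identity displayed just above \eqref{bsde} (equivalently $\e^{\int_0^t(r_s+\lambda_s)ds}(M^1_t+M^2_t)=G^1(t,t)V_t+\e^{\int_0^t(r_s+\lambda_s)ds}\int_0^t\e^{-\int_0^u(r_s+\lambda_s)ds}(\cdots)\,du$), collapses these two terms to a multiple of $K_tV_t$ --- with the signs of \eqref{bsde}; note that \eqref{bsde'} and \eqref{fbv} carry mutually inconsistent signs and a dropped $1/G^1(t,t)$ factor, which your reduction silently repairs --- and yields a genuinely Markovian, linear-in-$V$ driver to which the Lipschitz and linear-growth hypotheses of Theorem~\ref{t2} can actually be applied. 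Your two closing caveats are also well taken and are passed over in the paper: the uniform positivity of $G^1(t,t)$ (needed since the driver is divided by it) follows only from the boundedness of $\lambda^1,\lambda^2$ and the finite horizon, and because $G^1(t,t)$ and $\lambda_t$ depend on $\int_0^t\lambda^i(S_s)\,ds$, the representation $V_t=u(t,S_t)$ with a PDE in the two variables $(t,x)$ is literally correct only if the intensities are deterministic in $t$ or the forward state is enlarged to include the running integrals.
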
 

\begin{proof} 
The result follows from Theorem~\ref{t2} plus now classic results on FBSDEs, as detailed in, for example, Ma-Protter-Yong (1994) and also J. Zhang (2006). Therefore we only need to show that the drift function in~\eqref{fbv}, $B(t,x,u,z),$ given by \begin{align}\label{B} B(t,x,u,z)&=r_tV_t-\frac{A_t+\tilde{\theta}_t}{G^{1}(t,t)}-\frac{1}{G^{1}(t,t)}1_{\{b\}}LGD_{I}\lambda^{2}_t(1-\alpha_t)V_t \\& \nonumber -K_t\e^{\int_{0}^{t}(r_s+\lambda_s)ds}\int_{0}^{t}\e^{-\int_{0}^{u}(r_s+\lambda_s)}(A_u+\tilde{\theta}_u+1_{\{b\}}LGD_{I}\lambda^{2}_u(V_u-C_u)^{+})du) \\& \nonumber -K_t\e^{\int_{0}^{t}(r_s+\lambda_s)ds}(M^{1}_t+M^{2}_t) ,\end{align} satisfies $|B(t,x,u,z)-B(t,x,u^{'},z^{'})| \leq L(|u-u^{'}|+|z-z^{'}|),$. For this, it is sufficient to note that it is continuous and is a sum of bounded functions of $t$, piece-wise linear in $u$ (hence Lipschitz continuous in $u$ since the domain is compact) and piecewise-linear as a function of $H,$ which is a deterministic Lipschitz-continuous function in v, z, uniformly in t. 

\end{proof}

\section{A Comparison of Two FBSDEs and Conclusion}

We have extended the results of Brigo et al (2019) to the case where we have replaced conditional independence with an orthogonality condition. Therefore it is of interest to compare the BFSDE derived by Brigo et al with the new BFSDE we have obtained under our hypotheses.  The equation of Brigo at al (2019) is equation $(7)$ on page 795 of their paper. to wit, it is:

\begin{eqnarray}\label{df'}dS^{x}_t&=&r_rS^{x}_tdt+\sigma(S^{x}_t)dW_t; \qquad 0 \textless t \leq T\\ \nonumber 
S_0&=&x; \\ \nonumber
dV^{x}_t&=&-(\pi_t+\tilde{\theta}_t+((1-\alpha_t)(1_{\{b\}}LGD_{I}1_{\{V^{x}_t \textgreater 0\}}\lambda^{2}_t-f_t)-\lambda_t-c_t\alpha_t)V^{x}_t-(r_t-h_t)H_t)dt \\ \label{dbs}&&+ Z^{x}_tdW_t \\  \nonumber V^{x}_T&=&\Phi(S^{x}_T). 
\end{eqnarray} 

The difference in equations~\eqref{df'}-\eqref{dbs} and equations~\eqref{fb}-\eqref{fbv} lies in the drift of the backward stochastic differential component of the FBSDEs: the drift of~\eqref{dbs}, call it $\bar{B}_t,$ is given by \begin{equation}\label{BB} -(\pi_t+\tilde{\theta}_t+((1-\alpha_t)(1_{\{b\}}LGD_{I}1_{\{V^{x}_t \textgreater 0\}}\lambda^{2}_t-f_t)-\lambda_t-c_t\alpha_t)V^{x}_t-(r_t-h_t)H_t). \end{equation}  

Recalling the definitions of $A_t$ and $\tilde{\theta}_t$ in equations~\eqref{A} and~\eqref{tildet}, respectively, and comparing this to~\eqref{B}, we see that in~\eqref{B}, the term $$-(A_t+\tilde{\theta}_t+ 1_{\{b\}}LGD_{I}\lambda^{2}_t(1-\alpha_t)V_t)$$ is divided by $\frac{1}{G^{1}(t,t)},$ and that ~\eqref{B} contains two supplementary terms, namely, $$-K_t\e^{\int_{0}^{t}(r_s+\lambda_s)ds}\int_{0}^{t}\e^{-\int_{0}^{u}(r_s+\lambda_s)}(A_u+\tilde{\theta}_u+1_{\{b\}}LGD_{I}\lambda^{2}_u(V_u-C_u)^{+})du)$$ and $$-K_t\e^{\int_{0}^{t}(r_s+\lambda_s)ds}(M^{1}_t+M^{2}_t).$$

Thus we see that replacing the hypothesis of conditional independence with the weaker one of orthogonality gives a different BSDE modeling the credit risk in the CVX framework, the difference occurring in the drift, and we have made this difference explicit. We consider such a weakening of the hypotheses to be of value because in this more general context we allow for the modeling of simultaneous defaults, with ``defaults" here meaning credit events. An example might be some major company (such as Boeing aircraft) has an issue that created a sudden lack of demand from suppliers for their parts (think the Boeing 737 Max 8 jet airplane here). The suppliers will have cascading credit problems because of the same cause, and some might be best be modeled as simultaneous, a situation not covered by the assumption of conditional independence given the market. In such a model simultaneous credit events are not allowed. 
..

\end{document}